\newcommand{\e}{\varepsilon}
\newcommand{\R}{\ensuremath{\mathbb{R}}}
\newtheorem {theorem} {Theorem} %[section]
\newtheorem {proposition} [theorem] {Proposition}
\theoremstyle{definition}
\newtheorem {remark} {Remark}
\begin{document}

\author[F. Braun and A.C. Mereu]{Francisco Braun and Ana C. Mereu}

\address{Departamento de Matem\'atica, Universidade Federal de S\~ao Carlos, 13565-905 S\~ao Carlos, S\~ao Paulo, Brazil} 
\email{franciscobraun@dm.ufscar.br}

\address{Departamento de F\'isica, Qu\'imica e Matem\'atica, Universidade Federal de S\~ao Carlos, 18052-780 Sorocaba, S\~ao Paulo, Brazil} 
\email{anamereu@ufscar.br}

\title[Zero-Hopf bifurcation in a 3-D jerk  system]
{Zero-Hopf bifurcation in a 3-D jerk  system}

\subjclass[2010]{34C23, 34C25,  37G10}
\keywords{Zero-Hopf Bifurcation, Periodic solutions, Averaging theory}

\maketitle

\begin{abstract}
We consider the 3-D system defined by the jerk equation $\dddot{x} = -a \ddot{x} + x \dot{x}^2 -x^3 -b x + c \dot{x}$, with $a, b, c\in \R$. 
When $a=b=0$ and $c < 0$ the equilibrium point localized at the origin is a zero--Hopf equilibrium. 
We analyse the zero--Hopf Bifurcation that occur at this point when we persuade a quadratic perturbation of the coefficients, and prove that one, two or three periodic orbits can born when the parameter of the perturbation goes to $0$. 
\end{abstract}

\section{Introduction}

Motivated by the development of the Chua circuit \cite{C}, many researchers have been interested in finding other circuits that chaotically oscillate. Some simple third-order ordinary differential equations of the form 
$$
\dddot{x} = J(\ddot{x},\dot{x},x,t), 
$$ 
whose solutions are chaotic are example of such circuits \cite{E}, \cite{Sprott}. 
In classical mechanics, the function $J$ is called \emph{jerk}, and corresponds to the rate of change of acceleration, or equivalently to the third-time derivative of the position $x$. 
A jerk flow so is an explicit third order differential equation as above describing the evolution of the position $x(t)$ with the time $t$ 

The following non-linear third-order differential equation is the jerk flow studied by Vaidyanathan \cite{V}: 
\begin{equation}\label{jerkequation}
\dddot{x} = -a \ddot{x} + x \dot{x}^2 -x^3 -b x + c \dot{x},
\end{equation}
where $a$, $b$ and $c$ are parameters. 
This equation generalizes the one studied by Sprott \cite{Sprott}, where $b=c=0$. 
In system form, the differential equation \eqref{jerkequation} corresponds to the 3-D jerk system
\begin{equation}\label{jerksystem}
\begin{aligned}
\dot{x} & = y\\
\dot{y} & = z\\
\dot{z} & = -a z - b x+ c y+xy^2 -x^3.
\end{aligned}
\end{equation}
In \cite{V}, Vaidyanathan shows that system \eqref{jerksystem} is chaotic when  $a=3.6$, $b=1.3$ and $c=0.1$. 
The aim of the present paper is to study this system depending on the parameters $(a,b,c) \in \R^3$ from another point of view. 

A \textit{zero-Hopf equilibrium} of a 3-dimensional autonomous differential system is an isolated equilibrium point of the system, which  has a zero eigenvalue and a pair of purely imaginary eigenvalues. In general, the \textit{zero-Hopf bifurcation} is a parametric unfolding of a $3$-dimensional autonomous differential system with a zero-Hopf equilibrium. The unfolding can exhibit different topological type of dynamics in the small neighborhood of this isolated equilibrium as the parameter varies in a small neighborhood of the origin. For more information on the zero-Hopf bifurcation, we address the reader to Guckenheimer, Han, Holmes, Kuznetsov, Marsden and Scheurle in \cite{G, GH, H, K,SM}, respectively. 

As far as we know nobody has studied the existence or non-existence of zero-Hopf equilibria and zero-Hopf bifurcations in the 3-D jerk system \eqref{jerksystem}. 
The objective of this paper is to persuade this study. 

Usually the main tool for studying a zero-Hopf bifurcation is to pass the system to the normal form of a zero-Hopf bifurcation. 
Our analysis, however, will use the \emph{averaging theory} (see Section \ref{section2} for the results on this theory needed for our study). 
The averaging theory has already been used to study Hopf and zero-Hopf bifurcations in some others differential systems, see for instance \cite{BLM, CLQ,  EL, GL, L, ML, LOV}. 

Our main results are the following. 
We first characterize the zero-Hopf equilibrium point of system \eqref{jerksystem} in Proposition \ref{prop1}. 
\begin{proposition}\label{prop1}
The differential system \eqref{jerksystem} has a  zero-Hopf equilibrium point if and only if $a=b=0$ and $c<0$. 
In this case, the zero-Hopf equilibrium is the only singular point of the system, and it is localized at the origin. 
\end{proposition}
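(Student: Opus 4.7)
The plan is to reduce the proposition to elementary linear algebra by locating the equilibria of \eqref{jerksystem} and then imposing the zero-Hopf spectral condition on the Jacobian.

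First I would determine all equilibria. From $\dot{x}=\dot{y}=0$ we get $y=z=0$, and then $\dot{z}=0$ becomes $x(b+x^2)=0$. Hence the origin is always an equilibrium, and two extra equilibria $(\pm\sqrt{-b},0,0)$ appear when $b<0$. Next I would compute the Jacobian of \eqref{jerksystem} at a generic point $(x_0,0,0)$; the first two rows are immediate, and differentiating $-az-bx+cy+xy^2-x^3$ at $y=0$ gives
\begin{equation*}
J(x_0,0,0)=\begin{pmatrix} 0 & 1 & 0 \\ 0 & 0 & 1 \\ -b-3x_0^2 & c & -a \end{pmatrix}.
\end{equation*}
Expanding along the first row yields the characteristic polynomial $\lambda^3+a\lambda^2-c\lambda+(b+3x_0^2)$.

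For a zero-Hopf equilibrium the eigenvalues must be $0,\pm i\omega$ with $\omega>0$, so the characteristic polynomial must equal $\lambda(\lambda^2+\omega^2)=\lambda^3+\omega^2\lambda$. Matching coefficients forces $a=0$, $c=-\omega^2<0$, and $b+3x_0^2=0$. I would then check this system against the list of equilibria: at $x_0=0$ it gives $b=0$, while the branches $x_0=\pm\sqrt{-b}$ would require $b-3b=-2b=0$, contradicting $b<0$. Thus only the origin, with $a=b=0$ and $c<0$, can realize a zero-Hopf equilibrium.

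For the converse, assuming $a=b=0$ and $c<0$, the equilibrium equation reduces to $x^3=0$, so the origin is the unique equilibrium, and the characteristic polynomial $\lambda^3-c\lambda=\lambda(\lambda^2-c)$ has eigenvalues $0,\pm i\sqrt{-c}$, establishing the zero-Hopf character. There is no serious obstacle here; the only point requiring care is the bookkeeping that rules out the side branches $x_0=\pm\sqrt{-b}$, which collapses automatically once the relation $b+3x_0^2=0$ is combined with $x_0^2=-b$.
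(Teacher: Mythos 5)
Your proposal is correct and follows essentially the same route as the paper: find the equilibria $(x_0,0,0)$, compute the characteristic polynomial $\lambda^3+a\lambda^2-c\lambda+(b+3x_0^2)$, and match it against $\lambda(\lambda^2+\omega^2)$. The only difference is that you spell out the elimination of the side branches $x_0=\pm\sqrt{-b}$, which the paper leaves implicit.
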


Then we study when the  $3-D$ jerk system \eqref{jerksystem} having a zero-Hopf equilibrium point at the origin of coordinates has a zero-Hopf bifurcation producing some periodic orbit in Theorem \ref{teo1}. 

\begin{theorem}\label{teo1} 
Let $a_2,b_2,c_1,c_2,\delta \in \R$ such that $3 - \delta^2 \neq 0$ and $2 a_2 \delta^2 \neq b_2$ and set $(a, b, c) = (\e^2 a_2, \e^2 b_2, -\delta^2 + \e c_1+\e^2 c_2)$. 
Then the 3-D jerk system \eqref{jerksystem} has a zero-Hopf bifurcation at the equilibrium point localized at the origin of coordinates in the following situations: 
\begin{enumerate}
%\item $3 - \delta^2 > 0$ and $b_2 < 2 a_2 \delta^2 < - 4 b_2$; or $3 - \delta^2 < 0$, $a_2 \delta^2 < b_2$ and $-a_2 \delta^2 < 2 b_2 < 4 a_2 \delta^2$. 
%In these cases three periodic orbits born at the equlibrium point when $\e$ goes to zero. 
%\item $3 - \delta^2 < 0$, $-a_2 \delta^2 < 2 b_2 < 4 a_2 \delta^2$ and $a_2 \delta^2 \geq b_2$. 
%In this case two periodic solutions born at the equilibrium point when $\e \to 0$. 
%\item $3 - \delta^2 > 0$ and $a_2 \delta^2 - b_2 > 0$ with $b_2 \geq 2 a_2 \delta^2$ or $2 a_2 \delta^2 \geq -4 b_2$; or $3 - \delta^2 < 0$ and $a_2 \delta^2 < b_2$ with $-a_2 \delta^2 \geq 2 b_2$ or $2 b_2 \geq 4 a_2 \delta^2$. 
%In these cases one periodic orbit borns at the equilibrium point when $\e \to 0$.

\item $\dfrac{a_2 \delta ^2+ 2 b_2}{3-\delta^2}<0$ and $\dfrac{a_2\delta^2 -b_2}{3-\delta^2}>0$. In this case three periodic orbits born at the equilibrium point when $\e\to 0$.

\item $\dfrac{a_2 \delta ^2+ 2 b_2}{3-\delta^2}<0$ and $\dfrac{a_2\delta^2 -b_2}{3-\delta^2}<0$. In this case two periodic solutions born at the equilibrium point when $\e \to 0$.

\item $\dfrac{a_2 \delta ^2+ 2 b_2}{3-\delta^2}>0$ and $\dfrac{a_2\delta^2 -b_2}{3-\delta^2}>0$. In this case one periodic orbit borns at the equilibrium point when $\e \to 0$
\end{enumerate} 
%See Table \ref{ta1}. 
\end{theorem}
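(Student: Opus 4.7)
The proof plan is to apply the averaging theorem of Section~\ref{section2} after reducing system~\eqref{jerksystem} to a standard perturbative form. Under the parametrisation of the hypothesis, the linearisation at the origin at $\e=0$ has eigenvalues $0$ and $\pm i\delta$, so I begin with a linear change of variables $(x,y,z)\mapsto(u,v,w)$ block-diagonalising this linear part. Using the real and imaginary parts of an eigenvector for $i\delta$ one can take $x=u+v$, $y=\delta w$, $z=-\delta^2 v$; in these coordinates the unperturbed flow is $\dot u=0$ together with a rotation of angular speed $\delta$ in the $(v,w)$ plane.

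Rescaling $(u,v,w)=\e(U,V,W)$ pushes the cubic nonlinearities to order $\e^2$ while preserving the parameter corrections at orders $\e$ and $\e^2$, and the system becomes
\begin{equation*}
\dot U=\e P_1+\e^2 P_2,\qquad \dot V=\delta W+\e Q_1+\e^2 Q_2,\qquad \dot W=-\delta V,
\end{equation*}
with $P_j, Q_j$ explicit polynomials in $(U,V,W)$ (the last equation is exact because the original system already gives $\dot w=-\delta v$). Passing to cylindrical coordinates $V=\rho\cos\theta$, $W=\rho\sin\theta$ and using $\theta$ as the new independent variable (valid since $d\theta/dt=-\delta+O(\e)$ does not vanish near the origin) produces a $2\pi$-periodic planar system $dU/d\theta=\e g_1+\e^2 g_2+O(\e^3)$, $d\rho/d\theta=\e h_1+\e^2 h_2+O(\e^3)$, in which $g_1$ and $h_1$ are proportional to $\sin\theta$ and $\sin(2\theta)$ respectively, so their $\theta$-averages vanish identically and first-order averaging is inconclusive.

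I then apply second-order averaging to produce an averaged planar vector field $\bar{\mathcal F}(U,\rho)=(\bar G(U,\rho),\bar H(U,\rho))$, whose components come from averaging $g_2, h_2$ together with the standard cross-terms built from the Jacobian of $(g_1,h_1)$ and their $\theta$-primitives. By Section~\ref{section2}, every simple zero $(U^*,\rho^*)$ of $\bar{\mathcal F}$ with $\rho^*>0$ yields a periodic orbit of~\eqref{jerksystem} born at the origin as $\e\to 0$. Homogeneity of the cubic nonlinearities forces $\bar G$ and $\bar H$ to be polynomial in $U$ and $\rho^2$; moreover $\bar G$ has $U$ as a factor, so the zero locus splits into a branch $U=0$ and a branch $U^2=\Phi(\rho^2)$. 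On the first branch $\bar H(0,\rho)=0$ is linear in $\rho^2$ and contributes one positive root iff $(a_2\delta^2-b_2)/(3-\delta^2)>0$. On the second, eliminating $U^2$ via $\bar G=0$ and substituting into $\bar H=0$ gives a linear equation for $\rho^2$ that has a positive solution iff $(a_2\delta^2+2b_2)/(3-\delta^2)<0$, producing a symmetric pair $U=\pm\sqrt{\Phi(\rho^2)}$ of orbits. The hypotheses $3-\delta^2\neq 0$ and $2a_2\delta^2\neq b_2$ ensure respectively that the elimination is well defined and that the Jacobian of $\bar{\mathcal F}$ is non-singular at the roots, so these are simple and the averaging theorem applies. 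Summing the contributions from the two branches in each of the three listed sign configurations then produces respectively three, two and one periodic orbits.

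I expect the main technical obstacle to be the explicit computation of $\bar{\mathcal F}$ via the second-order averaging formula: one must produce $\theta$-primitives of trigonometric polynomials of degree up to three in $\cos\theta$ and $\sin\theta$, substitute them into the Jacobian-times-primitive term, and simplify large trigonometric integrals before the clean polynomial structure with coefficients $(a_2\delta^2+2b_2)$, $(a_2\delta^2-b_2)$ and $(3-\delta^2)$ finally emerges. The subsequent Jacobian check for simplicity of the zeros is then routine.
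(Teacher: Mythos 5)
Your proposal follows essentially the same route as the paper: rescale by $\e$, put the linear part at the origin into real Jordan form (your change $x=u+v$, $y=\delta w$, $z=-\delta^2 v$ is an equivalent choice to the paper's $X=w+v/\delta$, $Y=u$, $Z=-\delta v$), pass to cylindrical coordinates with the angle as independent variable, note that the first-order average vanishes identically under the given parametrisation, and read off the three cases from the two branches of zeros of the second-order averaged field, whose algebraic structure and sign conditions you describe exactly as the paper finds them. The one caveat --- shared with the paper's own proof --- is that on the second branch the pair $U=\pm\sqrt{\Phi(\rho^2)}$ (the paper's $w^2=(2a_2\delta^2-b_2)/5$) is real only when $2a_2\delta^2-b_2>0$, which is not implied by the two stated sign hypotheses, so strictly both your argument and the paper's need this additional check before counting two orbits on that branch.
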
 

%\begin{table}[h]
%\begin{tabular}{|c|c|c|c|c|}
%\cline{3-5}
%\multicolumn{2}{c}{} & \multicolumn{2}{|c|}{$b_2 < 2 a_2 \delta^2$} & \multirow{2}{*}{$b_2 > 2 a_2 \delta^2$}\\
%\cline{3-4} 
%\multicolumn{2}{c|}{ } & $a_2 \delta^2 < - 2 b_2$ & $a_2 \delta^2 > - 2 b_2$ & \\
%\hline 
%\multirow{2}{*}{$3 < \delta^2$} & $a_2 \delta^2 < b_2$ & & \begin{minipage}{2.8cm} \centering $3$ p.s.%, $1$ with $Jac > 0$ and $2$ with $Jac < 0$ 
%\end{minipage} & $1$ p.s.%, $Jac < 0$ 
%\\
%\cline{2-5} 
% & $a_2 \delta^2 \geq b_2$ & no p.s. & $2$ p.s.%, $Jac < 0$ 
% & no p.s. \\
 %\cline{1-5}
%$3 > \delta^2$ & $a_2 \delta^2 > b_2$ & \begin{minipage}{2.8cm} \centering $3$ p.s.%, $1$ with $Jac < 0$ and $2$ with $Jac > 0$ 
%\end{minipage} & $1$ p.s.%, $Jac < 0$ 
%& $1$ p.s.%, $Jac > 0$ 
%\\
 %\cline{1-5}
%\end{tabular}
%\caption{}\label{ta1}
%\end{table}

We illustrate a case of Theorem \ref{teo1} in Fig. \ref{fig1}. 
\begin{figure}
\begin{center}
\includegraphics[scale=.3]{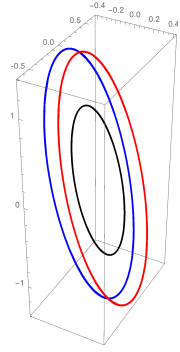}
\end{center}
\caption{Three periodic solutions emanating from the origin of coordinates. Here $\delta = 2$, $a_2 = 1$, $b_2 = 5$, $c_1 = c_2 = 0$ and $\e = 1/10$.}\label{fig1}
\end{figure}
We prove our results in Section \ref{s3}.

\begin{remark}
We prove Theorem \ref{teo1} by applying averaging theory of order $2$. 
Since we prove that the averaged function cannot be identically zero, it follows that with averaging of higher order we will not find more periodic solutions. 	
\end{remark}

%%%%%%%%%%%%%%%%%%%%%%%%%%%%%%%%%%%%%%%%%%%%%%%%%%%%%%%%%%%%%%%%%%%%%%%%%%%%%%%%%%%%%%%%%%%%%%%%%%%%%%%%%%%%%%%%%%%%%%%%%%%%%%%%%%%%%%%%%%%%%%%%%%%%%%%

%%%%%%%%%%%%%%%%%%%%%%%%%%%%%%%%%%%%%%%%%%%%%%%%%%%%%%%%%%%%%%%%%%%%%%%%%%%%%%%%%%%%%%%%%%%%%%%%%%%%%%%%%%%%%%%%%%%%%%%%%%%%%%%%%%%%%%%%%%%%%%%%%%%%%%%

\section{The averaging theory of first and second order}\label{section2}

In this section we summarize the main results on the theory of
averaging which will be used in the proof of Theorem \ref{teo1}. 
For a proof of the following theorem and more information on averaging theory, we address the reader to \cite{BL1, CLN}.

\begin{theorem}\label{averaging} 
Let $D$ be an open subset of $\R^n$, $\e_f > 0$ and consider the differential system
\begin{equation}\label{za2}
\dot x(t)=\e F_{1}(x,t)+\e^2 F_{2}(x,t)+\e^3 R(x,t,\e), 
\end{equation}
where $F_{1}, F_2:D\times\mathbb{R} \to \mathbb{R}^n$,
$R:D\times\mathbb{R}\times (-\e_f,\e _f)\to \mathbb{R}^n$ are
continuous functions, $T$--periodic in the second variable, $F_{1}(\cdot,t)\in C^1(D)$ for all $t\in \R$, $F_{1}$,
$F_{2}$, $R$ and $D_{x} F_{1}$ are locally Lipschitz with respect to $x$, and $R$ is  differentiable with respect to $\e$. 
Define $f, g:D\rightarrow \mathbb{R}^n$ as
\begin{equation}\label{ave1}
f(z)=\dfrac{1}{T} \displaystyle\int _0^{T}F_{1}(z, s)ds, 
\end{equation}
\begin{equation}\label{ave2}
g(z)=  \dfrac{1}{T} \displaystyle\int _0^{T}\left[D_z F_{1}(z, s) \cdot \displaystyle\int _0^{s} F_{1}(z, t)dt + F_2(z, s)\right]ds, 
\end{equation}
and assume that 
for an open and bounded subset $V\subset D$ and for each $\e \in (-\e_f,\e_f)\setminus \{0\}$, there exists $a_\e \in V$ such that  $f(a_{\e})+\e g(a_{\e})=0$ and $ d_B(f+\e g,V,a_\e)\neq 0$. 
Then for $|\e|>0 $ sufficiently small, there exists a $T$--periodic solution $\varphi (\cdot, \e)$ of system \eqref{za2} such that $\varphi (0, \e) = a_\e$. 
\end{theorem}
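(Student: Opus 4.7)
The plan is to follow the classical two-step strategy for second-order averaging: first reduce the non-autonomous equation \eqref{za2} to an autonomous perturbation of $\dot y = \e f(y) + \e^2 g(y) + O(\e^3)$ by a $T$-periodic near-identity change of variables; then use Brouwer degree to promote a zero of $f+\e g$ to a fixed point of the time-$T$ Poincar\'e map, which furnishes a $T$-periodic orbit.

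\textbf{Near-identity reduction.} Define
$$u(y,t) \;=\; \int_0^t \bigl(F_1(y,s) - f(y)\bigr)\, ds,$$
which is $T$-periodic in $t$ by the very definition of $f$ and $C^1$ in $y$ by the hypothesis $F_1(\cdot,t)\in C^1(D)$. For $|\e|$ small and $y$ in a compact subset of $D$, the map $x = y + \e\, u(y,t)$ is a $C^1$ near-identity diffeomorphism. Differentiating in $t$, substituting \eqref{za2}, and solving for $\dot y$ gives
$$\dot y \;=\; \e f(y) + \e^2 G(y,t) + \e^3 R_1(y,t,\e),$$
with $G(y,t) = F_2(y,t) + D_y F_1(y,t)\cdot u(y,t) - D_y u(y,t)\cdot f(y)$ and $R_1$ locally bounded (this uses the locally Lipschitz hypotheses on $F_2$, $R$ and $D_x F_1$). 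The mean of the third piece of $G$ over $[0,T]$ vanishes by $T$-periodicity of $u$, and inserting the formula for $u$ into the second piece identifies
$$\frac{1}{T}\int_0^T G(y,t)\, dt \;=\; g(y),$$
with $g$ exactly as in \eqref{ave2}.

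\textbf{Poincar\'e displacement and degree.} Let $\psi(t,z,\e)$ denote the flow of the $y$-equation with $\psi(0,z,\e)=z$. Since $\dot\psi = O(\e)$ uniformly on $\overline V$, a Gronwall estimate yields
$$\psi(T,z,\e) - z \;=\; \e T\bigl[f(z) + \e g(z)\bigr] + \e^3 S(z,\e),$$
with $S$ continuous and uniformly bounded on $\overline V\times(-\e_f,\e_f)$. Hence finding a $T$-periodic orbit of the $y$-system reduces to solving $H(z,\e) := f(z) + \e g(z) + \e^2 S(z,\e)/T = 0$. Under the hypothesis $d_B(f+\e g, V, a_\e)\neq 0$, the straight-line homotopy from $f+\e g$ to $H(\cdot,\e)$ is admissible on $\partial V$ for $|\e|$ sufficiently small, because $\|f+\e g\|$ is bounded away from $0$ on a neighborhood of $\partial V$ where the $\e^2$-perturbation cannot vanish the map; homotopy invariance of the Brouwer degree then gives a zero $z_\e$ of $H(\cdot,\e)$ in $V$ with $z_\e\to a_\e$ as $\e\to 0$. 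The orbit $\psi(\cdot,z_\e,\e)$ is $T$-periodic, and pulling back by $x = y + \e u(y,t)$ produces the desired $T$-periodic solution $\varphi(\cdot,\e)$ of \eqref{za2} with $\varphi(0,\e) = a_\e$ (identifying the parameterization of the family of zeros, as permitted since $a_\e$ is only required to exist for each $\e$).

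\textbf{Main obstacle.} The delicate point is the bookkeeping in the reduction step. To arrive at the particular kernel in \eqref{ave2} one must verify the integration-by-parts identity
$$\frac{1}{T}\int_0^T D_y F_1(y,t)\cdot u(y,t)\, dt \;=\; \frac{1}{T}\int_0^T D_z F_1(y,s)\cdot\!\int_0^s F_1(y,t)\,dt\, ds$$
modulo $T$-periodic boundary terms, and one must control the $O(\e^3)$ remainder $R_1$ using only the locally Lipschitz (rather than $C^2$) regularity of $F_2$, $R$, and $D_xF_1$. Both are standard but technical; the full details are carried out in \cite{BL1, CLN}.
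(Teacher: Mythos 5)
The paper does not prove Theorem \ref{averaging} at all: it is quoted from the literature, with the reader sent to \cite{BL1, CLN} for the proof. The argument in those references expands the Poincar\'e translation map directly from the integral equation, substituting $x(s,z,\e)=z+\e\int_0^sF_1(z,\tau)\,d\tau+O(\e^2)$ into $x(T,z,\e)-z=\e\int_0^TF_1(x(s),s)\,ds+\e^2\int_0^TF_2(x(s),s)\,ds+O(\e^3)$ to obtain $x(T,z,\e)-z=\e T\bigl(f(z)+\e g(z)\bigr)+O(\e^3)$ in one step, and then applies the degree argument. Your route through a near-identity change of variables is therefore genuinely different, and it is precisely in that extra step that your write-up has a real gap.

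The gap is the claim that $\tfrac{1}{T}\int_0^T G(y,t)\,dt=g(y)$ with $g$ as in \eqref{ave2}. Writing $A(t)=\int_0^tD_yF_1(y,s)\,ds$ and using $u(y,t)=\int_0^tF_1(y,s)\,ds-tf(y)$, a direct computation (integrating $\int_0^TtA'(t)\,dt$ by parts) gives
\begin{equation*}
\frac{1}{T}\int_0^T\Bigl[F_2(y,t)+D_yF_1(y,t)\,u(y,t)-D_yu(y,t)\,f(y)\Bigr]dt \;=\; g(y)-\frac{T}{2}\,D_yf(y)\,f(y),
\end{equation*}
so your identity holds only when $f\equiv 0$; the leftover term is not a ``$T$-periodic boundary term.'' The discrepancy is exactly compensated in the next step, because integrating $\e f(y(s))$ along the flow $y(s)=z+\e sf(z)+O(\e^2)$ over $[0,T]$ produces the extra contribution $\e^2\tfrac{T^2}{2}D_zf(z)f(z)$, which cancels $-\e^2 T\cdot\tfrac{T}{2}D_zf(z)f(z)$ and restores $\psi(T,z,\e)-z=\e T\bigl(f(z)+\e g(z)\bigr)+O(\e^3)$. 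As written, however, your two displayed assertions are mutually inconsistent: if the mean of $G$ really were $g$, the correctly expanded Poincar\'e map would be $\e T\bigl(f+\e g+\e\tfrac{T}{2}D_zf\,f\bigr)+O(\e^3)$, not the formula you state. You must either track the $\tfrac{T^2}{2}D_zf\,f$ term explicitly and exhibit the cancellation, or abandon the near-identity reduction in favour of the direct expansion of the integral equation, which yields \eqref{ave2} with no such bookkeeping. (In the application made in this paper the theorem is invoked only after forcing $f\equiv 0$, so the error is invisible there, but the statement being proved allows $f\not\equiv 0$.) The remaining ingredients --- the admissibility of the straight-line homotopy on a small neighbourhood of $a_\e$ even when $\min\|f+\e g\|$ is itself of order $\e$, and the identification $\varphi(0,\e)=a_\e$ up to reparametrising the family of zeros --- are handled acceptably.
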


The expression $d_B(f+\e g,V,a_\e)\neq 0$ means that the Brouwer
degree of the function $f+\e g: V\to \R^n$ at the fixed point $a_{\e}$ is
not zero. We recall that a sufficient condition for this is that the Jacobian determinant of the function $f+\e g$ at $a_{\e}$ is not
zero. 
For the definition, the mentioned and other properties of Brouwer degree we address the reader to Browder's paper \cite{B}. 

If $f$ is not identically zero, then the zeros of $f+\e g$ are mainly the zeros of $f$ for $\e$ sufficiently small. In this case the previous result provides the averaging theory of first order.

If $f$ is identically zero and $g$ is not identically zero, then clearly the zeros of $f+\e g$ are the zeros of $g$. 
In this case the previous result provides the averaging theory of second order.

%%%%%%%%%%%%%%%%%%%%%%%%%%%%%%%%%%%%%%%%%%%%%%%%%%%%%%%%%%%%%%%%%%%%%%%%%%%%%%%%%%%%%%%%%%%%%%%%%%%%%%%%%%%%%%%%%%%%%%%%%%%%%%%%%%%%%%%%%%%%%%%%

%%%%%%%%%%%%%%%%%%%%%%%%%%%%%%%%%%%%%%%%%%%%%%%%%%%%%%%%%%%%%%%%%%%%%%%%%%%%%%%%%%%%%%%%%%%%%%%%%%%%%%%%%%%%%%%%%%%%%%%%%%%%%%%%%%%%%%%%%%%%%%

\section{Proofs}\label{s3}
\begin{proof}[Proof of Proposition \ref{prop1}]
$(x,0,0)$ with $x = 0$ or $x^2 = - b$, if $b\leq0$, are the singular points of system \eqref{jerksystem}. 

The characteristic polynomial of the linear part of the system at $(x,0,0)$ is 
$$
p(\lambda)= - \lambda^3  - a \lambda^2+ c \lambda - b - 3 x^2.
$$
In order to have a zero-Hopf equilibrium, we need one null and two purely imaginary, say $\pm i \delta$, with $\delta > 0$, eigenvalues. 
Imposing that 
$$
p(\lambda) = -\lambda(\lambda^2+\delta^2), 
$$ 
we obtain $a = b = 0$ and $c=-\delta^2$. 
In particular, the only zero-Hopf equilibrium is $(0,0,0)$ and we are done. 
\end{proof}
 
\begin{proof}[Proof of Theorem \ref{teo1}] 
With the parameters $(a, b, c) = (\e a_1 + \e^2 a_2, \e b_1 + \e^2 b_2, -\delta^2 + \e c_1 + \e^2 c_2)$, the $3-D$ jerk system \eqref{jerksystem} takes the form
\begin{equation}\label{jerksystem1}
\begin{aligned}
\dot{x} & =y \\ 
\dot{y} & = z \\ 
\dot{z} & = -\e \left(a_1 + \e a_2\right) z - \e \left(b_1 + \e b_2\right) x+ (-\delta^2 + \e c_1 + \e^2 c_2) y + x y^2 - x^3, 
\end{aligned}
\end{equation}
As in the proof of Proposition \ref{prop1}, when $\varepsilon = 0$, the eigenvalues at the origin of system \eqref{jerksystem1} are $0$ and $\pm i \delta$. 

With the change of variables $(x,y,z) = \left(\e X, \e Y, \e Z\right)$ and rescaling of time, system \eqref{jerksystem1} writes 
\begin{equation}\label{jerksystem2}
\begin{aligned}
\dot{X} & = Y \\ 
\dot{Y} & = Z \\ 
\dot{Z} & = -\delta^2 Y \hspace{-.1cm} + \hspace{-.1cm} \e\left(-a_1 Z -b_1 X +c_1 Y\right) \hspace{-.1cm} + \hspace{-.1cm} \e^2\left(-a_2 Z -b_2 X +c_2 Y+XY^2-X^3\right). 
\end{aligned}
\end{equation}
Now we make a linear change of variables in order to have the matrix of the linear part of system \eqref{jerksystem2} at the origin when $\e = 0$ written in its real Jordan normal form 
$$
J = 
\left( \begin{array}{ccc} 0 & -\delta &0\\ 
\delta & 0 & 0\\
0 & 0 & 0 \end{array}
\right).
$$
It is simple to see that the change 
\begin{equation}\label{changevariables}
\begin{aligned}
X & = w +\frac{v}{\delta}, \\
Y & = u, \\
Z & = - \delta v
\end{aligned}
\end{equation} 
makes what we want. 
In these new variables, system \eqref{jerksystem2} writes 
%\begin{equation}\label{jerksystem3}
%\begin{aligned}
%\dot{u} & = - \delta v \\ 
%\dot{v} & = \delta u + \e \frac{b_1 v - \delta (c_1 u - b_1 w) - \delta^2 a_1 v}{\delta^2} \\ 
%&\phantom{={}} + \e^2 \frac{v^3 + 3 \delta v^2 w + \delta^2(b_2 v - u^2 v + 3 v w^2) - \delta^3(c_2 u - b_2 w + u^2 w - w^3) - \delta^4 a_2 v}{\delta^4}\\
 %\dot{w} & = \e \frac{-b_1 v + \delta(c_1 u -b_1 w) + \delta^2 a_1 v}{\delta^3} \\ 
 %&\phantom{={}} + \e^2 \frac{-v^3 - 3 \delta v^2 w - \delta^2(b_2 v - u^2 v + 3 v w^2) + \delta^3(c_2 u - b_2 w + u^2 w - w^3 ) + \delta^4 a_2 v}{\delta^5}
%\end{aligned}
%\end{equation}
\begin{equation}\label{jerksystem3}
\begin{aligned}
\dot{u} & = - \delta v \\ 
\dot{v} & = \delta u + \e \delta \left(h_1 + \e h_2\right)\\
\dot{w} & = -\e \left(h_1 + \e h_2 \right), 
\end{aligned}
\end{equation}
with 
%$$
%\begin{aligned}
%f(u,v,w) & = \frac{b_1 v - \delta(c_1 u -b_1 w) - \delta^2 a_1 v}{\delta^3}, \\
%g(u,v,w) & = \frac{v^3 + 3 \delta v^2 w + \delta^2(b_2 v - u^2 v + 3 v w^2) - \delta^3(c_2 u - b_2 w + u^2 w - w^3 ) - \delta^4 a_2 v}{\delta^5}. 
%\end{aligned}
%$$
$$
\begin{aligned}
h_1 = h_1(u,v,w) & = \frac{b_1 v}{\delta^3} - \frac{c_1 u -b_1 w}{\delta^2} - \frac{a_1 v}{\delta}, \\
h_2 = h_2(u,v,w) & = \frac{v^3}{\delta^5} + \frac{3 v^2 w}{\delta^4} + \frac{(b_2 - u^2 + 3 w^2) v}{\delta^3} - \frac{c_2 u + (u^2 - b_2) w - w^3}{\delta^2} - \frac{a_2 v}{\delta}. 
\end{aligned}
$$
Now we pass the differential system \eqref{jerksystem3} to cylindrical coordinates $(r, \theta,w)$ defined by $u=r\cos\theta$, $v=r \sin\theta$, $w=w$, obtaining 
\begin{equation}\label{interm}
\begin{aligned}
\dot{\theta} & = \delta + \e \delta \frac{\cos \theta}{r} \left(h_1 + \e h_2\right), \\
\dot{r} & = \e \delta \sin \theta \left(h_1 + \e h_2\right), \\
\dot{w} & = - \e \left(h_1 + \e h_2\right), 
\end{aligned}
\end{equation}
with $h_1 = h_1\left(r \cos \theta, r\sin \theta, w\right)$ and $h_2 = h_2\left(r\cos \theta, r \sin \theta, w\right)$. 
By introducing $\theta$ as the new independent variable, we obtain 
\begin{equation}\label{theta}
\begin{aligned}
\frac{dr}{d\theta} & = \frac{\e  (h_1 + \e h_2)}{r+\e \cos \theta (h_1+\e h_2)} r \sin \theta \\ 
& = \e h_1 \sin \theta + 2 \sin \theta \e^2 \frac{h_2 r - h_1^2 \cos \theta}{r} + O(\e^3)\\
\frac{d w}{d \theta} & = - \frac{\e (h_1 + \e h_2)}{r + \e \cos \theta (h_1 + \e h_2)}\frac{r}{\delta} \\ 
& = -\e \frac{h_1}{\delta} - 2 \e^2 \frac{h_2 r- h_1^2 \cos \theta}{r \delta} + O(\e^3)
\end{aligned}
\end{equation}
In the notation of Theorem \ref{averaging}, by taking $t = \theta$, $T = 2\pi$ and $z = (r, w)^{tr}$, we have 
$$
F_1(r, w, \theta) = h_1 \left(
\begin{array}{c}
\sin \theta \\
-1/\delta 
\end{array} 
\right), \ \ \ \ F_2(r, w, \theta) = 2 \frac{h_2 r - h_1^2 \cos \theta}{r} \left( 
\begin{array}{c}
\sin \theta \\
-1/\delta 
\end{array} 
\right). 
$$
We calculate $f(r, w)$ obtaining 
$$
f(r,w) = \frac{1}{2 \pi} \int_0^{2 \pi} F_1(r,w, \theta) d\theta =  \left(
\begin{array}{c}
\dfrac{r}{2 \delta^3} (b_1 - a_1 \delta^2) \\
\dfrac{-b_1 w}{\delta^3} 
\end{array}
\right)
$$
%$$
%F_1(\theta, r, w)= \left(\begin{array}{l} F_1^1(r,\theta,w) \\ F_1^2(r,\theta,w)\end{array}\right), \  \mbox{and} \ f(r, w)= \left(\begin{array}{l} f_1(r,w) \\ f_2(r,w)\end{array}\right), $$
%where
%$$
%\begin{aligned}
%f_1(r,w) & =\frac{1}{2\pi}\displaystyle \int_0^{2\pi} F_1^1(r,\theta,w) = \frac{r(b_1+a_1\gamma)}{2 \sqrt{-\gamma^3}}, \\
%f_2(r,w) & = \frac{1}{2\pi}\displaystyle \int_0^{2\pi} F_1^2(r,\theta,w) = \frac{- b_1 w}{ \sqrt{-\gamma^3}}.
%\end{aligned}
%$$
The solutions of $f(r,w) = 0$ with $b_1 - a_1\delta^2 \neq 0$ are contained in $r = 0$, and so they are not allowed, because $r$ must be positive. 
On the other hand, if $b_1 - a_1 \delta^2 = 0$, the zeros of $f(r, w)$ are not isolated. 
In particular we can not apply the averaging of first order.  
We pass then to the averaging of second order, assuming $f \equiv 0$. 
This makes $a_1 = b_1 = 0$. 
We now calculate $g(r, w)$: 
$$
\begin{aligned}
g(r, w) & = \frac{1}{2 \pi} \int_0^{2 \pi} \left(D_{(r, w)} F_1(r,w,\theta) \int_0^\theta F_1(r,w,s) ds + F_2(r,w,\theta)  \right) d\theta \\
& = \frac{1}{2 \pi} \int_0^{2 \pi} \left(
\frac{c_1^2 r }{2 \delta^5}\left(
\begin{array}{c} 
 \delta \cos \theta \sin^3 \theta \\
-\cos \theta \sin^2 \theta
\end{array} 
\right)
+ F_2(r, w, \theta)
  \right) d \theta \\
& = \frac{1}{2 \delta^5} \left( 
\begin{array}{c} 
r\left((3 - \delta^2) r^2 + 4 b_2 \delta^2 - 4 a_2 \delta^4 + 12 \delta^2 w^2 \right)/4 \\
-w \left((3 - \delta^2) r^2 + 2 b_2 \delta^2 + 2 \delta^2 w^2 \right)
\end{array}
\right). 
\end{aligned}
$$
We analyze the solutions of $g(r, w) = 0$, with $r > 0$. 
We first observe that in order to obtain isolated solutions according to Theorem \ref{averaging}, we ought to have $\delta^2 \neq 3$. 
With this assumption in force, we readily obtain the following two group of solutions 
$$
r^2 = \frac{4 (a_2 \delta^2 - b_2) \delta^2}{3 - \delta^2}, \ \ \ \ w = 0, 
$$
or 
$$
r^2 = \frac{- 4 (a_2 \delta^2 + 2 b_2) \delta^2}{5 (3 - \delta^2)}, \ \ \ \ w^2 = \frac{2 a_2 \delta^2 - b_2}{5}. 
$$ 
The Jacobian determinant of $g$ at the solutions above take the values 
$$
- \frac{(a_2 \delta^2 - b_2) (2 a_2 \delta^2 - b_2)}{\delta^6} 
$$ 
and 
$$ 
-\frac{2 (a_2 \delta^2 + 2 b_2) (2 a_2 \delta^2 - b_2)}{5 \delta^6}, 
$$
respectively. 

%We assume that $2 a_2 \delta^2 - b_2 \neq 0$ in order to have at least one isolated solution. 
%We have two cases to analyse: $3 - \delta^2 > 0$ and $3 - \delta^2 < 0$. 
%In the first case, if $b_2 < 2 a_2 \delta^2 < - 4 b_2$, then it follows that $a_2 \delta^2 - b_2 > 0$ %and so we have the isolated solutions $(r_1, w_1) = \left(2 \delta \sqrt{(a_2 \delta^2 - b_2)/(3 - %\delta^2)}  ,0\right)$, $(r_2, w_2) = \left(2 \delta \sqrt{-(a_2 \delta^2 + 2 b_2)/(5 (3 - \delta^2))}, %\sqrt{(2 a_2 \delta^2 - b_2)/5} \right)$ and $(r_3, w_3) = (r_2, -w_2)$. 
%If $a_2 \delta^2 - b_2 > 0$ holds but $b_2 < 2 a_2 \delta^2 < - 4 b_2$ does not hold, then we h %ave just the solution $(r_1, w_1)$. 

%In the second case, namely  $3 - \delta^2 < 0$, we can have three different situations: if $-a_2 %\delta^2 < 2 b_2 < 4 a_2 \delta^2$ and $a_2 \delta^2 < b_2$ we will have the three solutions %$(r_1, w_1)$, $(r_2, w_2)$ and $(r_3, w_3)$ calculated above. 
%If $- a_2 \delta^2 < 2 b_2 < 4 a_2 \delta^2$ and $a_2 \delta^2 \geq b_2$ we will have just the %solutions $(r_2, w_2)$ and $(r_3, w_3)$. 
%Finally, if $-a_2 \delta^2 < 2 b_2 < 4 a_2 \delta^2$ fails but $a_2 \delta^2< b_2$ holds, we will %have only $(r_1, w_1)$. 

If $\dfrac{a_2\delta^2 -b_2}{3-\delta^2}>0$, $\dfrac{a_2\delta^2 +2b_2}{3-\delta^2}<0$ and $2 a_2 \delta^2 - b_2 \neq 0$, we have the isolated solutions $(r_1, w_1) = \left(2 \delta \sqrt{\dfrac{a_2 \delta^2 - b_2}{3 - \delta^2}}  ,0\right)$, $(r_2, w_2) = \left(2 \delta \sqrt{-\dfrac{a_2 \delta^2 + 2 b_2}{5 (3 - \delta^2)}}, \sqrt{\dfrac{2 a_2 \delta^2 - b_2}{5}} \right)$ and $(r_3, w_3) = (r_2, -w_2)$. 

If $\dfrac{a_2\delta^2 -b_2}{3-\delta^2}>0$, $\dfrac{a_2\delta^2 +2b_2}{3-\delta^2}\geq 0$ and $2 a_2 \delta^2 - b_2 \neq 0$    then we have just the isolated solution $(r_1, w_1)$.

If   $\dfrac{a_2\delta^2 -b_2}{3-\delta^2}\leq 0$, $\dfrac{a_2\delta^2 +2b_2}{3-\delta^2}<0$  and 
$2 a_2 \delta^2 - b_2 \neq 0$ we will have just the solutions $(r_2, w_2)$ and $(r_3, w_3)$. 
\smallskip

Now Theorem \ref{averaging} guarantees that, for $\e$ sufficiently small, to each root $(r_i, w_i)$ of $g$ it corresponds a periodic solution with period $2 \pi$ of system \eqref{theta} of the form $(r(\theta,\e), w(\theta,\e))$, with $(r(0,\e),w(0,\e)) = (r_i,w_i)$. 
Corresponding to this one, system \eqref{interm} has the periodic solution of certain period $T_\e$ $(\theta(t,\e),r(t,\e),w(t,\e))$ satisfying $\left(\theta(0,\e), r(0, \e), w(0, \e)\right) = (0, r_i, w_i)$. 
Then system \eqref{jerksystem3} has the periodic solution of period $T_\e$ 
$$%\begin{equation}\label{relation}
\left(u(t,\e), v(t,\e), w(t, \e) \right) = \left( r(t,\e)\cos \theta(t, \e),  r(t,\e)\sin \theta(t, \e), w(t,\e) \right),
$$%\end{equation}
for $\e$ sufficiently small, with $(u(0,\e), v(0,\e), w(0,\e)) = (r_i, 0, w_i)$. 
Now we apply the change of variables \eqref{changevariables} to this one and obtain for small $\e$ the periodic solution $\left(X(t, \e), Y(t,\e), Z(t,\e)\right)$ of system \eqref{jerksystem2} with the same period, such that  $(X(0,\e,Y(0,\e),Z(0,\e)) = (w_i + r_i/\delta, r_i, 0)$. 
Finally, for $\e\neq 0$ sufficiently small, system \eqref{jerksystem1} has the periodic solution $\left(x(t, \e), y(t, \e), z(t, \e)\right) = \left(\e X(\theta),\e Y(\theta), \e Z(\theta)\right)$, with $\left(x(0, \e), y(0, \e), z(0, \e)\right) = \e  (w_i + r_i/\delta, r_i, 0)$, and that clearly tends to the origin of coordinates when $\e\rightarrow 0$. 
Thus, this is a periodic solution emanating from the zero-Hopf bifurcation point located at the origin of coordinates when $\e = 0$. 
This concludes the proof of Theorem \ref{teo1}. 
\end{proof}

%%%%%%%%%%%%%%%%%%%%%%%%%%%%%%%%%%%%%%%%%%%%%%%%%%%%%%%%%%%%%%%%%%%%%%%%%%%%%%%%%%%%%%%%%%%%%%%%%%%%%%%%%%%%%%%%%%%%%%%%%%%%%%%%%%%%%%%%%%

\section*{Acknowledgments}
We thank Prof. Luis Fernando Mello for calling our attention to the system studied here. 
The first author was partially supported by the grant 2017/00136-0, S\~ao Paulo Research Foundation (FAPESP).

%%%%%%%%%%%%%%%%%%%%%%%%%%%%%%%%%%%%%%%%%%%%%%%%%%%%%%%%%%%%%%%%%%%%%%%%%%%%%%%%%%%%%%%%%%%%%%%%%%%%%%%%%%%%%%%%%%%%%%%%%%%%%%%%%%%%%%%%%

\end{document}